\title{Flat Mittag-Leffler modules over countable rings}
\author{Silvana Bazzoni}
\address{
Dipartimento di Matematica Pura e Applicata, Universit\'a di Padova \\
Via Trieste 63, 35121 Padova, Italy
}
\email{bazzoni@math.unipd.it}
\author{Jan \v{S}\v{t}ov\'\i\v{c}ek}
\address{
Charles University in Prague, Faculty of Mathematics and Physics \\
Department of Algebra \\
Sokolovska 83, 186 75 Praha 8, Czech Republic
}
\email{stovicek@karlin.mff.cuni.cz}
\subjclass[2010]{16D40 (primary), 16E30, 03E75 (secondary)}
\keywords{Flat Mittag-Leffler modules, precovers, Ext-orthogonal classes}
\thanks{%
First named author supported by MIUR, PRIN 2007, project ``Rings, algebras, modules and categories'' and by Universit\`{a} di Padova (Progetto di Ateneo CPDA071244/07 `` Algebras and cluster categories'').
Second named author supported by the Eduard \v{C}ech Center for Algebra and Geometry (LC505).
}
\date{\today}
\renewcommand{\iff}{if and only if }
\newcommand{\st}{such that }
\newcommand{\la}{\longrightarrow}
\newcommand{\li}{\varinjlim}
\DeclareMathOperator{\Hom}{Hom}
\DeclareMathOperator{\Ext}{Ext}
\DeclareMathOperator{\Img}{Im}
\newcommand{\Mod}[1]{\hbox{\rm Mod-}{#1}}
\newcommand{\Flat}[1]{\hbox{\rm Flat-}{#1}}
\newcommand{\C}{\mathcal{C}}
\newcommand{\D}{\mathcal{D}}
\newcommand{\clS}{\mathcal{S}}
\newcommand{\X}{\mathcal{X}}
\newcommand{\card}[1]{\left\lvert{#1}\right\rvert}
\theoremstyle{plain}
\newtheorem{thm}{Theorem}
\newtheorem{lem}[thm]{Lemma}
\newtheorem{prop}[thm]{Proposition}
\theoremstyle{definition}
\newtheorem{defn}[thm]{Definition}
\theoremstyle{remark}
\newtheorem{rem}[thm]{Remark}
\begin{document}


\begin{abstract}
We show that over any ring, the double Ext-orthogonal class to all flat Mittag-Leffler modules contains all countable direct limits of flat Mittag-Leffler modules. If the ring is countable, then the double orthogonal class consists precisely of all flat modules and we deduce, using a recent result of \v{S}aroch and Trlifaj, that the class of flat Mittag-Leffler modules is not precovering in $\Mod{R}$ unless $R$ is right perfect.
\end{abstract}

\maketitle

\section*{Introduction}

The notion of a Mittag-Leffler module was introduced by Raynaud and Gruson~\cite{RG}, who used the concept to prove a conjecture due to Grothendieck that the projectivity of infinitely generated modules over commutative rings is a local property. This is a crucial step for defining and working with infinitely generated vector bundles, as considered by Drinfeld in~\cite{Dr}, where we also refer for more explanation.

The main step behind this geometrically motivated result is a completely general characterization of projective modules over any (in general non-commutative) ring $R$. Namely, one can show (consult~\cite{Dr}) that an $R$-module is projective \iff $M$ satisfies the following three conditions:

\begin{enumerate}
 \item $M$ is flat,
 \item $M$ is Mittag-Leffler,
 \item $M$ is a direct sum of countably generated modules.
\end{enumerate}

As mentioned by Drinfeld, the proof of projectivity of a given module might be non-constructive even in very simple cases, because it requires the Axiom of Choice. This applies for instance to the ring $\mathbb{Q}$ of rational numbers and the $\mathbb{Q}$-module $\mathbb{R}$ of real numbers. The main trouble there is condition (3). Thus, one might consider replacing projective modules by flat Mittag-Leffler modules (these are called ``projective modules with a human face'' in a preliminary version of~\cite{Dr}).

However, a surprising result in~\cite[\S5]{EGPT} indicates that if one is interested in homological algebra, this might not be a good idea at all. Namely, the class of flat Mittag-Leffler abelian groups does not provide for precovers (sometimes also called right approximations). In the present paper, we use recent results due to \v{S}aroch and Trlifaj~\cite{SarT} to show this is a much more general phenomenon and applies to many geometrically interesting examples. Namely, we prove in Theorem~\ref{thm:double_perp_of_flat_ML} that the class of flat Mittag-Leffler $R$-modules over a countable ring $R$ is precovering \iff $R$ is a right perfect ring. Note that in that case the classes of projective modules, flat Mittag-Leffler modules and flat modules coincide, so the flat Mittag-Leffler precovers are just the projective ones.

\section{Preliminaries}

In this paper, $R$ will always be an associative, not necessarily commutative ring with a unit. If not specified otherwise, a module will stand for a right $R$-module. We will denote by $\D$ the class of all modules which are flat and satisfy the Mittag-Leffler condition in the sense of~\cite{RG,HT}:

\begin{defn} \label{def:ML}
$M$ is called a \emph{Mittag-Leffler} module if the canonical morphism
$ \rho: M \otimes_R \prod_{i \in I} Q_i \la \prod_{i \in I} M \otimes_R Q_i $
is injective for each family $(Q_i \mid i \in I)$ of left $R$-modules.
\end{defn}

A crucial closure property of the class $\D$ has been obtained in~\cite{HT}:

\begin{prop} \label{prop:aleph_1_continuous_systems} \cite[Proposition 2.2]{HT}
Let $R$ be a ring and $(F_i, u_{ji}: F_i \to F_j)$ be a direct system of modules from $\D$ indexed by $(I,\leq)$. Assume that for each increasing chain $(i_n \mid n < \omega)$ in $I$, the module $\li_{n < \omega} F_{i_n}$ belongs to $\D$. Then $M = \li_{i \in I} F_i$ belongs to $\D$.
\end{prop}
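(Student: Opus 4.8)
The module $M$ is flat because the class of flat modules is closed under arbitrary direct limits, so the real content is that $M$ is Mittag-Leffler. I would base this on the following form of the Raynaud--Gruson criterion (see \cite{RG}): a flat module $N$ is Mittag-Leffler \iff every finite subset of $N$ is contained in a submodule $N'$ which lies in $\D$ and is pure in $N$. The forward implication is trivial (take $N' = N$), while for the converse, if $\xi$ lies in the kernel of $N \otimes_R \prod_\ell Q_\ell \to \prod_\ell (N \otimes_R Q_\ell)$, one writes $\xi$ as a finite sum $\sum_k n_k \otimes q^{(k)}$, chooses such an $N'$ containing $n_1,\dots,n_k$, lifts $\xi$ to $\xi' \in N' \otimes_R \prod_\ell Q_\ell$, and uses that purity makes $N' \otimes_R P \to N \otimes_R P$ injective for every left module $P$ (applied to $P = Q_\ell$ and then to $P = \prod_\ell Q_\ell$) together with the Mittag-Leffler property of $N'$ to conclude $\xi' = 0$, hence $\xi = 0$. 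I would also use two closure properties of $\D$ coming from \cite{RG}: $\D$ is closed under pure submodules, and whenever $0 \to A \to B \to C \to 0$ is pure exact with $A, C \in \D$ one has $B \in \D$ (the Mittag-Leffler part of the latter being a chase of the same kind).

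By this criterion it suffices to show that every finite subset $X \subseteq M$ is contained in a pure submodule of $M$ lying in $\D$. Since each $F_i \in \D$, the structure theory of \cite{RG,HT} supplies a \emph{dense system} $\mathcal{S}_i$ in $F_i$: a family of countably generated submodules, directed by inclusion, with union $F_i$, closed under unions of countable chains, each member belonging to $\D$ and pure in $F_i$, and with every countable subset of $F_i$ contained in some member. Fixing lifts of the finitely many elements of $X$ to a single $F_{i_0}$, I would then perform an $\omega$-step closing-off: build an increasing sequence $i_0 \le i_1 \le i_2 \le \cdots$ in $I$ together with members $N_{i_n} \in \mathcal{S}_{i_n}$ satisfying $u_{i_{n+1} i_n}(N_{i_n}) \subseteq N_{i_{n+1}}$ and arranging that (a) the lifts of $X$ lie in $N_{i_0}$, (b) every element of $N_{i_n}$ whose image in $M$ vanishes already has vanishing image in some $N_{i_m}$ with $m > n$, and (c) every finite system of $R$-linear equations with parameters in $N_{i_n}$ that is solvable in $M$ is solvable in some $N_{i_m}$ with $m > n$. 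Setting $G := \li_{n < \omega} N_{i_n}$, condition (b) makes the canonical map $G \to M$ injective, condition (c) makes its image $N$ a pure submodule of $M$, and condition (a) gives $X \subseteq N$.

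It then remains to see that $G \in \D$. Each $N_{i_n}$ is pure in $F_{i_n}$ and these inclusions are compatible with the transition maps, so passing to the colimit along the chain yields a pure monomorphism $G = \li_n N_{i_n} \hookrightarrow \li_n F_{i_n}$, since directed colimits are exact and carry levelwise pure monomorphisms to pure monomorphisms. By the hypothesis of the proposition $\li_{n < \omega} F_{i_n} \in \D$, and $\D$ is closed under pure submodules, so $G \in \D$, whence $N \cong G$ lies in $\D$, as required. This is precisely where---and, in essence, the only place where---the countable-chain hypothesis enters: it turns the otherwise uncontrolled union $\li_n N_{i_n}$ of objects of $\D$ into a pure submodule of an object already known to lie in $\D$.

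The main obstacle is the organisation of the closing-off. Because neither the transition maps $u_{ji}$ nor the structure maps $F_i \to M$ need be injective, the naive candidate ``image in $M$ of a countable sub-colimit of the $F_i$'' need not be flat, let alone pure in $M$, so one genuinely has to iterate in order to absorb all identifications and all solutions of finite linear systems, and one must check that the process closes up after countably many steps while keeping the submodules countably generated. Over a general (possibly uncountable) ring the bookkeeping is more delicate: one is forced to work with $\le \card{R}$-generated submodules throughout and with a transfinite version of the chain, treating its limit stages of uncountable cofinality by the criterion of the first step (a finite subset of the limit already lies below it) and its limit stages of countable cofinality by the argument of the third step. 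The conceptual skeleton, however, is the one above: flatness is automatic, the Mittag-Leffler condition is local in finite subsets, and the countable-chain hypothesis is exactly what keeps the local pieces inside $\D$.
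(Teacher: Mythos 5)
The paper does not prove this proposition; it is quoted verbatim as \cite[Proposition 2.2]{HT}, so there is no internal argument to compare against. What follows is an assessment of your blind attempt on its own merits.

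Your conceptual skeleton is the right one and matches the standard approach: flatness is free, the Mittag--Leffler condition is detected by the Raynaud--Gruson criterion (every finite subset sits in a pure submodule that already lies in $\D$), and the role of the countable-chain hypothesis is exactly to guarantee that a suitably constructed countable sub-colimit $G = \li_{n<\omega}N_{i_n}$ --- a pure submodule of $\li_{n<\omega}F_{i_n}$ --- lands in $\D$. The auxiliary facts you invoke (closure of $\D$ under pure submodules and pure extensions, preservation of pure monomorphisms by directed colimits, existence of dense systems of countably generated projective pure submodules in each $F_i$) are all correct and available in \cite{RG,HT}.

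The genuine gap is in the closing-off step, and it is not the kind of bookkeeping issue you wave at in the final paragraph. Over a ring $R$ of uncountable cardinality, a countably generated $N_{i_n}$ can have $\card{R}$ elements, and hence the kernel $\ker(N_{i_n}\to M)$ and the family of finite linear systems over $N_{i_n}$ solvable in $M$ can each have $\card{R}$ members. A diagonal bookkeeping over an $\omega$-indexed chain absorbs only countably many such conditions; moreover, a general directed poset $I$ has no upper bounds for uncountable (or even countable) subsets, so you cannot repair this by jumping further up in $I$ in a single step. Your proposed fix, ``pass to a transfinite chain,'' does not close the gap because the hypothesis of the proposition speaks only of $\omega$-chains: you would need to know that the limit of an uncountable chain of the $F_i$ belongs to $\D$, which is precisely (a case of) the conclusion you are proving, so an uncontrolled appeal to it is circular. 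What actually fixes this is an induction on $\card{I}$ in the style of Iwamura's lemma, decomposing $I$ as a continuous well-ordered union of directed subposets of strictly smaller cardinality; the base case $\card{I}\le\aleph_0$ (where $I$ has a cofinal chain of type $\le\omega$) is handled by your argument, limit stages of cofinality $\omega$ are handled by the chain hypothesis, and limit stages of uncountable cofinality are handled by the finite (or countable) character of the Raynaud--Gruson criterion. Your sketch never sets up this induction, so as written it establishes the proposition only for countable $R$ --- which, as it happens, is the only case used by the main theorem of this paper, but is not what the proposition asserts.
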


Let us look more closely at countable chains of modules and their limits. Recall that given a sequence of morphisms
$$ F_0 \overset{u_0}\la F_1 \overset{u_1}\la F_2 \overset{u_2}\la F_3 \la \dots $$
we have a short exact sequence
$$
\eta: \qquad
0 \la \bigoplus_{n<\omega} F_n \overset{\varphi}\la \bigoplus_{n<\omega} F_n \la \li F_n \la 0,
\eqno{(*)} \label{eqn:countable_lim}
$$
\st $\varphi$ is defined by $\varphi \iota_n = \iota_n - \iota_{n+1} u_n$, where $\iota_n: F_n \to \bigoplus F_m$ are the canonical inclusions. Note the following simple fact:

\begin{lem} \label{lem:local_splitting_of_lim}
Given a chain $ F_0 \overset{u_0}\to F_1 \overset{u_1}\to F_2 \to \dots$ of morphisms as above and a number $n_0 < \omega$, the middle term of~$(*)$ decomposes as
$$ \bigoplus_{n<\omega} F_n = \varphi\left(\bigoplus_{m<n_0} F_m\right) \oplus \left(\bigoplus_{m\ge n_0} F_m\right). $$
\end{lem}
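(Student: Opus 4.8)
The plan is to write $A=\bigoplus_{m<n_0}F_m$ for the submodule spanned by the first $n_0$ summands and $B=\bigoplus_{m\ge n_0}F_m$ for the rest, so that $\bigoplus_{n<\omega}F_n=A\oplus B$ automatically, and then to prove the asserted decomposition in two steps: that $\varphi(A)\cap B=0$, and that $\varphi(A)+B=\bigoplus_{n<\omega}F_n$. (Here $\varphi(A)$ means the image of the submodule $A$ under $\varphi$, which is isomorphic to $A$ since $\varphi$ is injective by exactness of $\eta$.)

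For the first step I would take $x=\sum_{m<n_0}\iota_m a_m\in A$ with $\varphi(x)\in B$ and write out $\varphi(x)=\sum_{m<n_0}\bigl(\iota_m a_m-\iota_{m+1}u_m a_m\bigr)$. Inspecting the $F_0,F_1,\dots,F_{n_0-1}$-components in turn forces first $a_0=0$, then $a_1-u_0a_0=a_1=0$, and inductively $a_m=0$ for all $m<n_0$; hence $x=0$ and $\varphi(x)=0$, so $\varphi(A)\cap B=0$.

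For the second step I would use that $B$ already contains $\iota_m(F_m)$ for every $m\ge n_0$, so it is enough to show $\iota_m(F_m)\subseteq\varphi(A)+B$ for all $m<n_0$; this follows by downward induction on $m$, since $\iota_m a=\varphi(\iota_m a)+\iota_{m+1}u_m a$, and the last summand lies in $B$ when $m=n_0-1$ and in $\varphi(A)+B$ by the inductive hypothesis when $m+1<n_0$. Putting the two steps together yields $\bigoplus_{n<\omega}F_n=\varphi(A)\oplus B$, which is the claim.

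I do not expect a real obstacle here; the only point requiring a little care is the bookkeeping of components and indices in the two computations. A slightly slicker packaging, should one prefer it, is to note that the endomorphism $\theta$ of $\bigoplus_{n<\omega}F_n$ that restricts to $\varphi$ on $A$ and to the identity on $B$ has the form $1+\nu$, where $\nu\iota_m=-\iota_{m+1}u_m$ for $m<n_0$ and $\nu\iota_m=0$ otherwise; since $\nu^{\,n_0+1}=0$ the map $\theta$ is an automorphism, and it carries $A\oplus B$ onto $\varphi(A)\oplus B$.
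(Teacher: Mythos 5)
Your proof is correct and takes essentially the same approach as the paper's: both are elementary component-by-component computations that unwind the formula $\varphi\iota_n=\iota_n-\iota_{n+1}u_n$. The paper simply writes down the explicit decomposition $y=z+w$ of an arbitrary element directly, whereas you split it into the two standard internal-direct-sum checks ($\varphi(A)\cap B=0$ and $\varphi(A)+B$ is everything); your closing remark that the map $\theta=1+\nu$ is a unipotent automorphism is a tidy repackaging of the identical calculation.
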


\begin{proof}
Note that the module $\varphi(\bigoplus_{m<n_0} F_m)$ is generated by elements of the form
$$
\left(0, \dots, 0, x_m, -u_m(x_m), 0 \dots\right) \in \bigoplus_{n<\omega} F_n,
$$
where $m < n_0$ and $x_m \in F_m$. It follows easily that one can uniquely express each $y = (y_n) \in \bigoplus_{n<\omega} F_n$ as $y = z+w$, where $z \in \varphi\left(\bigoplus_{m<n_0} F_m\right)$ and $w \in \bigoplus_{m\ge n_0} F_m$. Namely, we take 
$z = (y_0, \dots, y_{n_0-1},y'_{n_0}, 0, 0, \dots)$ with $-y'_{n_0}=u_{n_0-1}(y_{n_0-1}) + u_{n_0-1}u_{n_0-2}(y_{n_0-2}) + \dots + u_{n_0-1}u_{n_0-2}\dots u_{1}u_{0}(y_0)$ and $w = y-z \in \bigoplus_{m\ge n_0} F_m$.
\end{proof}

We will also need a few simple results concerning infinite combinatorics, starting with a well known lemma.

\begin{lem} \label{lem:nice_cardinals}
For any cardinal $\mu$ there is a cardinal $\lambda \ge \mu$ \st $\lambda^{\aleph_0} = 2^\lambda$.
\end{lem}

\begin{proof}
We refer for instance to~\cite[Lemma 3.1]{Gr}. For the reader's convenience, we recall how to construct $\lambda$. We put $\mu_0 = \mu$ and for each $n<\omega$ inductively construct $\mu_{n+1} = 2^{\mu_n}$. Then $\lambda = \sup_{n<\omega} \mu_n$ has the required property, see for example~\cite[p. 50, fact (6.21)]{Jech}.
\end{proof}

The next lemma deals with a construction of a large family of ``almost disjoint'' maps $f: \omega \to \lambda$. The result is
well known in the literature and it has many different proofs. We refer for instance to ~\cite[Lemma 2.3]{Ba} or ~\cite[Proposition II.5.5]{EM}.

\begin{lem} \label{lem:almost_disjoint} 
Let $\lambda$ be an infinite cardinal. Then there is a subset $J \subseteq \lambda^\omega$ of cardinality $\lambda^{\aleph_0}$ \st for any pair of distinct maps $f,g: \omega \to \lambda$ of $J$, the set formed by the $x \in \omega$ on which the values $f(x)$ and $g(x)$ coincide is a finite initial segment of $\omega$.
\end{lem}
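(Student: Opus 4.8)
The plan is to reduce the problem to the combinatorics of the tree of finite sequences. First I would note that, since $\lambda$ is infinite, the set $\lambda^{<\omega} = \bigcup_{n<\omega}\lambda^n$ of all finite sequences of ordinals below $\lambda$ has cardinality $\lambda$, so one can fix a bijection $e\colon \lambda^{<\omega}\to\lambda$. The family $J$ will then be obtained by coding the branches of this tree: to each $h\in\lambda^\omega$ I would associate the map $b_h\colon\omega\to\lambda$ sending $n$ to the value of $e$ on the initial segment $(h(0),\dots,h(n-1))$ of $h$, and set $J=\{b_h\mid h\in\lambda^\omega\}\subseteq\lambda^\omega$.

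The main step is to check that this $J$ has the stated almost-disjointness property. Given distinct $h,h'\in\lambda^\omega$, let $n_0$ be the least index at which they differ. For $n\le n_0$ the length-$n$ initial segments of $h$ and $h'$ coincide, so $b_h(n)=b_{h'}(n)$; for $n>n_0$ these initial segments already differ (in their entry with index $n_0$, which lies in range because $n_0<n$), and injectivity of $e$ gives $b_h(n)\ne b_{h'}(n)$. Hence the agreement set $\{n<\omega\mid b_h(n)=b_{h'}(n)\}$ is exactly $\{0,1,\dots,n_0\}$, a finite initial segment of $\omega$; in particular $b_h\ne b_{h'}$, so the assignment $h\mapsto b_h$ is injective.

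Finally, injectivity of $h\mapsto b_h$ yields $\card{J}=\card{\lambda^\omega}=\lambda^{\aleph_0}$, which is the required cardinality (the reverse bound $\card{J}\le\lambda^{\aleph_0}$ is automatic from $J\subseteq\lambda^\omega$). I do not expect a genuine obstacle here: this is the classical construction of an almost disjoint family via the sequence tree, and the only points that require a little care are the cardinal-arithmetic identity $\card{\lambda^{<\omega}}=\lambda$ for infinite $\lambda$ (which is what makes the coding map $e$ available), and the bookkeeping showing that two branches agree precisely on the initial segment determined by the first place where the corresponding sequences diverge.
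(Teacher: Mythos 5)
Your proof is correct and takes essentially the same route as the paper: both code each branch $h\in\lambda^\omega$ of the sequence tree $\lambda^{<\omega}$ by the map $n\mapsto e(h\restriction n)$ after fixing a bijection $e\colon\lambda^{<\omega}\to\lambda$, using $\card{\lambda^{<\omega}}=\lambda$. Your write-up merely spells out more explicitly that the agreement set is precisely $\{0,\dots,n_0\}$ and that $h\mapsto b_h$ is injective, details the paper leaves to the reader.
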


\begin{proof}
Consider the tree $T$ of the finite sequences of elements of $\lambda$, i.e. $T = \{t: n \to \lambda \mid n<\omega\}$. Since $\lambda$ is infinite, we have $\card{T} = \card{\bigcup_{n<\omega} \lambda^n} = \lambda$, so there is a bijection $b: T \to \lambda$. For every map $f: \omega \to \lambda$ denote by $A_f: \omega \to T$ the induced map which sends $n<\omega$ to $f\restriction n \in T$. Clearly, if $f$, $g$ are two different maps in $\lambda^\omega$, the values of 
$A_f$ and $A_g$ coincide only on a finite initial segment of $\omega$. Now, we can put $J = \{ b\circ A_f \mid f \in \lambda^\omega \}$.
\end{proof}

\section{Main result}

Now we are in a position to state our main result, which is inspired by~\cite[\S5]{HT}. It will be proved by using a cardinal argument similar to the one in ~\cite[Proposition 2.5]{Ba}. Note that the result sharpens~\cite[Theorem 2.9]{SarT} by removing the additional set-theoretical assumption of Singular Cardinal Hypothesis, and also~\cite[Corollaries 7.6 and 7.7]{HT} by removing the assumption that $\D$ is closed under products.

Regarding the notation and terminology, given a class $\C \subseteq \Mod{R}$, we put $\C^\perp = \{ M \in \Mod{R} \mid \Ext^1_R(\C,M) = 0 \}$ and dually ${^\perp\C} = \{ M \in \Mod{R} \mid \Ext^1_R(M,\C) = 0 \}$. Recall that a module is called \emph{cotorsion} if it cannot be non-trivially extended by a flat module.

We recall also the notion of a precover, or sometimes called right approximation. If $\X$ is any class of modules and $M\in \Mod{R}$, a homomorphism $f\colon X \to M$ is called an $\X$-\emph{precover} of $M$ if $X \in \X$ and for every homomorphism $f '\in \Hom_R(X', M)$ with $X'\in\X$ there exists a homomorphism $g\colon X'\to X$ such that $f '= f g$. The class $\X$ is called \emph{precovering} if each $M \in \Mod{R}$ admits an $\X$-precover.

\begin{thm} \label{thm:double_perp_of_flat_ML}
Let $R$ be a ring and $\D$ be the class of all flat Mittag-Leffler right $R$-modules. Given any countable chain
$$ F_0 \overset{u_0}\la F_1 \overset{u_1}\la F_2 \overset{u_2}\la F_3 \la \dots $$
of morphisms \st $F_n \in \D$ for all $n<\omega$, we have $\li F_n \in {^\perp (\D^\perp)}$.
If, moreover, $R$ is a countable ring, then the following hold:
\begin{enumerate}
 \item $\D^\perp$ is precisely the class of all cotorsion modules.
 \item $\D$ is a precovering class in $\Mod{R}$ \iff $R$ is right perfect.
\end{enumerate}
\end{thm}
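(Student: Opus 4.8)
Here is how I would approach the proof; I treat the three assertions in turn, the first being the technical heart.

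\medskip

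For the first assertion, fix a countable chain $(F_n,u_n)$ with $F_n\in\D$ for all $n$ and a module $N\in\D^\perp$; we must show $\Ext^1_R(\li F_n,N)=0$. The key preliminary observation is that $\bigoplus_{n<\omega}F_n$ again lies in $\D$, since flatness and the Mittag-Leffler property are both stable under arbitrary direct sums. Applying $\Hom_R(-,N)$ to the short exact sequence $(*)$ and using that $\Ext^1_R(\bigoplus_n F_n,N)=0$ (because $\bigoplus_n F_n\in\D$ and $N\in\D^\perp$), one finds that $\Ext^1_R(\li F_n,N)$ is the cokernel of the map $\varphi^{\ast}\colon\Hom_R(\bigoplus_n F_n,N)\to\Hom_R(\bigoplus_n F_n,N)$ induced by $\varphi$. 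Under the identification $\Hom_R(\bigoplus_n F_n,N)\cong\prod_n\Hom_R(F_n,N)$ this map is $(\theta_n)_n\mapsto(\theta_n-\theta_{n+1}u_n)_n$, and its cokernel is $\varprojlim^1$ of the inverse system $(\Hom_R(F_n,N),u_n^{\ast})$. So the first assertion amounts to: for every $N\in\D^\perp$, the map $\varphi^{\ast}$ is surjective, i.e.\ $\varprojlim^1(\Hom_R(F_n,N))=0$; equivalently (as $\varphi$ is a monomorphism onto $H:=\varphi(\bigoplus_n F_n)$), every homomorphism from $H$ to $N$ extends to $\bigoplus_n F_n$.

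\medskip

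To prove this I would argue by contradiction, using a counting argument in the spirit of \cite[Proposition 2.5]{Ba} and \cite[\S5]{HT}. Suppose $f\colon H\to N$ does not extend to $\bigoplus_n F_n$. Choose, by Lemma~\ref{lem:nice_cardinals}, a cardinal $\lambda\ge\aleph_0+\card R+\card N+\sum_{n<\omega}\card{F_n}$ with $\lambda^{\aleph_0}=2^{\lambda}$, and an almost disjoint family $J\subseteq\lambda^{\omega}$ of cardinality $\lambda^{\aleph_0}=2^{\lambda}$ (Lemma~\ref{lem:almost_disjoint}). From this data construct a module $B$ by gluing, for each branch $\sigma\in J$, a copy of $\bigoplus_n F_n$, and identifying the copies indexed by distinct $\sigma,\tau$ along the submodule $\varphi(\bigoplus_{m<k}F_m)$, where $k$ is the length of the longest common initial segment of $\sigma$ and $\tau$; by Lemma~\ref{lem:local_splitting_of_lim} this is a direct summand of $\bigoplus_n F_n$, so the gluing maps can be taken to be split monomorphisms. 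Presenting $B$ as the direct limit, over the directed poset of finite subsets of $J$, of the corresponding finite gluings, one verifies that every countable increasing chain in this system has as direct limit a direct sum of modules from $\D$, hence one in $\D$; so $B\in\D$ by Proposition~\ref{prop:aleph_1_continuous_systems}. On the other hand, a cardinality computation comparing $\card J=2^{\lambda}=\lambda^{\aleph_0}$ with $\card N$ and $\card{\bigoplus_n F_n}$, both at most $\lambda$, should force that the failure of $f$ to extend across one branch, spread over the $2^{\lambda}$ branches of $B$, yields $\Ext^1_R(B,N)\neq0$ --- contradicting $B\in\D$ and $N\in\D^\perp$. The construction of $B$ and the verification that $B\in\D$ (this is where the almost disjointness of $J$ is used), together with this final cardinality argument --- precisely what makes the proof go through in ZFC, in place of the Singular Cardinal Hypothesis used in \cite{SarT} --- is where I expect the real work to lie.

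\medskip

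For assertion (1): every cotorsion module lies in $\D^\perp$ because $\D\subseteq\Flat{R}$. For the reverse inclusion, note first that every countably presented flat module is a direct limit of a countable chain of finitely generated free modules (Lazard's theorem, in its form for countably presented modules), hence lies in ${}^\perp(\D^\perp)$ by the first assertion. Now, since $R$ is countable, every flat right $R$-module admits a filtration whose consecutive factors are countably presented flat modules; as ${}^\perp(\D^\perp)$ is closed under filtrations (Eklof's lemma), this gives $\Flat{R}\subseteq{}^\perp(\D^\perp)$. Taking right Ext-orthogonals and using the general identity $\D^\perp=({}^\perp(\D^\perp))^\perp$, we obtain $\D^\perp\subseteq(\Flat{R})^\perp$, which is exactly the class of cotorsion modules; hence $\D^\perp$ equals the class of cotorsion modules.

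\medskip

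For assertion (2): if $R$ is right perfect, every flat module is projective, so $\D$ coincides with the class of projective modules, which is precovering. Conversely, suppose $\D$ is precovering. By the results of \v{S}aroch and Trlifaj \cite{SarT}, a $\D$-precover of any module can then be chosen to be a special precover, so for a countable chain $(F_n,u_n)$ in $\D$ there is a short exact sequence $0\to Y\to D\to\li F_n\to0$ with $D\in\D$ and $Y\in\D^\perp$; since $\li F_n\in{}^\perp(\D^\perp)$ by the first assertion, $\Ext^1_R(\li F_n,Y)=0$, the sequence splits, and $\li F_n$ is a direct summand of $D$, so $\li F_n\in\D$. Thus $\D$ is closed under direct limits of countable chains, and Proposition~\ref{prop:aleph_1_continuous_systems} upgrades this to closure under arbitrary direct limits; since every flat module is a direct limit of finitely generated free modules, $\D=\Flat{R}$. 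Finally, if $R$ were not right perfect, Bass's theorem would yield a strictly decreasing sequence of principal left ideals of $R$, hence a flat right $R$-module --- realised as the direct limit of a countable chain of rank-one free modules --- that is not Mittag-Leffler (cf.\ \cite{RG}); this contradicts $\D=\Flat{R}$. Therefore $R$ is right perfect, which completes the plan.
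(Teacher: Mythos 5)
Your outline of the main assertion follows the same route as the paper: choose $\lambda$ with $\lambda^{\aleph_0}=2^\lambda$, take an almost disjoint family $J\subseteq\lambda^\omega$ of size $2^\lambda$, and glue $|J|$ copies of $\bigoplus_n F_n$ along their common initial segments to build a single large module (your $B$, the paper's $E$), showing it is flat Mittag-Leffler by Proposition~\ref{prop:aleph_1_continuous_systems} and Lemma~\ref{lem:local_splitting_of_lim}. Your identification of the problem with a $\varprojlim^1$-vanishing and with the extendability of morphisms from $\varphi(\bigoplus F_n)$ is also correct. The genuine gap is precisely the step you flag with the phrase ``a cardinality computation \ldots should force \ldots $\Ext^1_R(B,N)\ne 0$'': this is the entire content of the proof and it is not routine. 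What the paper does to close it is to construct, alongside $E$, an explicit short exact sequence $0\to P\to E\to F^{(2^\lambda)}\to 0$ in which $P$ is chosen so that $\card{\Hom_R(P,C)}\le 2^\lambda$; applying $\Hom_R(-,C)$ and using $\Ext^1_R(E,C)=0$ forces $\Hom_R(P,C)\twoheadrightarrow\Ext^1_R(F,C)^{2^\lambda}$, and if $\Ext^1_R(F,C)\ne 0$ the target has cardinality $\ge 2^{2^\lambda}$ --- the contradiction. Without exhibiting this cokernel presentation of $F^{(2^\lambda)}$ and the cardinal bound on the kernel, there is no mechanism to turn ``$f$ fails to extend along one branch'' into ``$\Ext^1$ against a flat Mittag-Leffler module is nonzero.'' So you should supply the submodule $P\subset E$ (the direct sum of the copies $F_{n,f(n)}$ over $f\in J$), verify $E/P\cong F^{(J)}$ via the pushout of the summing map, and carry out the cardinal arithmetic there.

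Two minor remarks on the countable-ring assertions. For (1) the paper avoids Eklof's lemma and flat filtrations entirely: given a non-cotorsion $C$, one only needs a single countable flat module $F$ with $\Ext^1_R(F,C)\ne 0$, and since $F$ is a countable direct limit of finitely generated free modules the first assertion already puts it in ${}^\perp(\D^\perp)$. Your filtration argument is correct but heavier than necessary. For (2) the paper simply cites \cite[Theorem 2.10]{SarT}; your sketch (special precovers via closedness under extensions, then closure under countable direct limits forces $\D=\Flat{R}$, then Bass's theorem) is a plausible reconstruction of what lies behind that citation, but you should be aware it is not independent of \cite{Sarch}-type machinery and would need a careful check that precovering indeed upgrades to special precovering for $\D$.
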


\begin{proof}
Assume we have a countable direct system $(F_n, u_n)$ as above, put $F = \li F_n$, and fix a module $C \in \D^\perp$. We must prove that $\Ext^1_R(F,C) = 0$.

Let us fix an infinite cardinal $\lambda$, depending on $C$, \st we have $\lambda \ge \card{\Hom_R(F_n,C)}$ for each $n<\omega$ and $\lambda^{\aleph_0} = 2^\lambda$; we can do this using Lemma~\ref{lem:nice_cardinals}. Applying Lemma~\ref{lem:almost_disjoint}, we find a subset $J \subseteq \lambda^\omega$ of cardinality $2^\lambda$ \st the values of each pair $f,g: \omega \to \lambda$ of distinct elements of $J$ coincide only on a finite initial segment of~$\omega$. We claim that there is a short exact sequence of the form
$$
0 \la P \la E  \la F^{(2^\lambda)} \la 0
\eqno{(\dag)} \label{eqn:copies_of_F_loosely_together}
$$
\st $E \in \D$ and $\card{\Hom_R(P,C)} \le 2^\lambda$.

Let us construct such a sequence. First, denote for each $\alpha<\lambda$ by $F_{n,\alpha}$ a copy of $F_n$, and by $P$ the direct sum $\bigoplus F_{n,\alpha}$ taken over all pairs $(n,\alpha)$ \st $n<\omega$ and $\alpha=f(n)$ for some $f \in J$. Note that $P$ is a summand in $\bigoplus_{n<\omega} F_n^{(\lambda)}$, so we have
$$
\card{\Hom_R(P,C)} \le
\card{\Hom_R\big(\bigoplus F_n^{(\lambda)},C\big)} \le
\prod_{n<\omega} \card{\Hom_R(F_n,C)}^\lambda \le
\lambda^{\omega \times \lambda} =
2^\lambda.
$$

Next, we will construct $E$. Given $f \in J$, let
$$
\iota_f: \bigoplus_{n<\omega} F_n \la P
$$
be the split inclusion which sends each $F_n$ to $F_{n,f(n)}$. Using the short exact sequence $(*)$ from page~\pageref{eqn:countable_lim}, we can extend $P$ by $F$ via the following pushout diagram:
$$
\begin{CD}
\eta: \qquad
@.         0   @>>>   \bigoplus_{n<\omega} F_n   @>{\varphi}>>   \bigoplus_{n<\omega} F_n   @>>>   F   @>>>   0   \\
     @.    @.                  @V{\iota_f}VV                       @V{\vartheta_f}VV               @|             \\
\varepsilon_f: \qquad
@.         0   @>>>                  P          @>{\subseteq}>>           E_f               @>>>   F   @>>>   0
\end{CD}
\eqno{(\Delta)} \label{eqn:construction_E_f}
$$

Now, we can put these extensions for all $f \in J$ together. Namely, let $\sigma: P^{(J)} \to P$ be the summing map and consider the pushout diagram:
$$
\begin{CD}
\bigoplus \varepsilon_f: \qquad
@.   0   @>>>   P^{(J)}   @>>>   \bigoplus_{f \in J} E_f   @>{\rho}>>   F^{(J)}   @>>>   0   \\
  @. @.      @V{\sigma}VV               @V{\pi}VV                   @|                 \\
\varepsilon: \qquad
@.   0   @>>>      P      @>>>              E              @>>>   F^{(J)}   @>>>   0
\end{CD}
$$
%
For each $g \in J$, the composition of the canonical inclusion $\nu_g\colon E_g \to \bigoplus_{f \in J} E_f$ with the morphism $\pi$ yields a monomorphism $E_g \to E$. In fact, if $y\in E_g$ is such that $\pi \nu_g(y)=0$, then $\rho(\nu_g(y))=0$, hence the exact sequence $\varepsilon_g$ gives that $y$ is in the image of $P$ and the composition of the canonical embedding $\mu_g\colon P \to P^{(J)}$ with the morphism $\sigma$ is a monomorphism. From now on we shall without loss of generality view these monomorphisms $E_g \to E$ as inclusions.

To prove the existence of $(\dag)$, it suffices to show that $E \in \D$ in $\varepsilon$. To this end, denote for any subset $S \subseteq J$ by $M_S$ the module
$$ M_S = \sum_{f \in S} \Img \vartheta_f \quad (\subseteq E, \textrm{ see diagram } (\Delta)) $$
Then the family $(M_S \mid S \subseteq J \,\&\, \card{S} \le \aleph_0)$ with obvious inclusions forms a direct system and we claim that its union is the whole of $E$. Indeed, it is straightforward to check, using diagram~$(\Delta)$ and the construction of the embeddings $E_g \subseteq E$, that $E = P + \sum_{f \in J} \Img \vartheta_f$. Further, the left hand square of diagram~$(\Delta)$ is a pull-back, which implies $P \cap \Img \vartheta_f = \Img \iota_f$ and
$$ P \cap \sum_{f \in J} \Img \vartheta_f \supseteq \sum_{f \in J} \Img \iota_f = P. $$
Thus, $E = \sum_{f \in J} \Img \vartheta_f$ and the claim is proved.

Moreover, the union of any chain $M_{S_0} \subseteq M_{S_1} \subseteq M_{S_2} \subseteq \dots$ from the direct system belongs to the direct system again. Therefore, if we prove that $M_S \in \D$ for each countable $S \subseteq J$, it will follow from Proposition~\ref{prop:aleph_1_continuous_systems} that $E \in \D$. Our task is then reduced to prove the following lemma:

\begin{lem} \label{lem:inner_structure_of_E}
With the notation as above, the following hold:
\begin{enumerate}
\item Given $S \subseteq T \subseteq J$ with $S$ and $T$ finite and \st $\card{T} = \card{S}+1$, the inclusion $M_S \subseteq M_T$ splits and there is $n_0 < \omega$ \st $M_T/M_S \cong \bigoplus_{m \ge n_0} F_m$.
\item Given a countable subset $S \subseteq J$, the module $M_S$ is isomorphic to a countable direct sum with each summand isomorphic to some $F_n$, $n<\omega$. In particular, $M_S \in \D$.
\end{enumerate}
\end{lem}

\begin{proof}
Let us focus on (1) since (2) is an immediate consequence. Denote by $f: \omega \to \lambda$ the single element of $T \setminus S$, and let $n_0 < \omega$ be the smallest number \st $f(n_0) \ne g(n_0)$ for each $g \in S$.

We claim that the following are satisfied by the construction:
$$
M_S \cap \Img \vartheta_f =
\iota_f\left(\bigoplus_{m<n_0} F_m\right) =
\vartheta_f \circ \varphi\left(\bigoplus_{m<n_0} F_m\right).
$$
The second equality holds simply because $\vartheta_f \circ \varphi = \iota_f$ by diagram~$(\Delta)$. For the first, note that 
$M_S \cap \Img \vartheta_f $
as a submodule of $E$, is contained in $P$. Since $P \cap \Img \vartheta_f = \Img \iota_f$, we have
$$ M_S \cap \Img \vartheta_f =\left(\sum_{g \in S} \Img \vartheta_g\right) \cap \Img \iota_f =\left(\sum_{g \in S} \Img \iota_g\right) \cap \Img \iota_f = \iota_f\left(\bigoplus_{m<n_0} F_m\right), $$
by the construction of $P$. This proves the claim.

Invoking Lemma~\ref{lem:local_splitting_of_lim}, we further deduce that
$$ \Img \vartheta_f = \iota_f\left(\bigoplus_{m<n_0} F_m\right) \oplus \vartheta_f\left(\bigoplus_{m\ge n_0} F_m\right). $$
In particular, the inclusion $M_S \cap \Img \vartheta_f \subseteq \Img \vartheta_f$ splits and so does the inclusion $M_S \subseteq M_S + \Img \vartheta_f = M_T$. Moreover, we have the isomorphisms
$$
M_T/M_S = (M_S + \Img \vartheta_f)/M_S \cong \Img \vartheta_f/(M_S \cap \Img \vartheta_f) \cong \bigoplus_{m\ge n_0} F_m,
$$
which finishes the proof of the lemma.
\end{proof}

Having established the existence of $(\dag)$ \st $E \in \D$ and $\card{\Hom_R(P,C)} \le 2^\lambda$, let us apply $\Hom_R(-,C)$ on $(\dag)$. Since $C \in \D^\perp$ by assumption, we get an exact sequence
$$
\Hom_R(P,C) \la \Ext^1_R(F^{(2^\lambda)},C) \la 0.
$$
Suppose now that $\Ext^1_R(F,C) \ne 0$. Then we would have $\lvert \Ext^1_R(F^{(2^\lambda)},C) \rvert \ge 2^{2^\lambda}$, which would contradict the fact that $\card{\Hom_R(P,C)} \le 2^\lambda$. Hence $\Ext^1_R(F,C) = 0$ as desired.

To finish the proof of Theorem~\ref{thm:double_perp_of_flat_ML}, suppose $R$ is a countable ring. Since each $F \in \D$ is flat, $\D^\perp$ contains all cotorsion modules. On the other hand, if $C$ is not cotorsion, there is a countable flat module $F$ \st $\Ext^1_R(F,C) \ne 0$; see for instance~\cite[Theorems 4.1.1 and 3.2.9]{GT}. By the first part of Theorem~\ref{thm:double_perp_of_flat_ML}, we know that $F \in {^\perp (\D^\perp)}$, so $C \not\in \D^\perp$. Hence $\D^\perp$ consists precisely of cotorsion modules.

The fact that $\D$ is not precovering unless $R$ is right perfect (and $\D$ is then the class of projective modules) follows directly from~\cite[Theorem 2.10]{SarT}. This finishes the proof of Theorem~\ref{thm:double_perp_of_flat_ML}.
\end{proof}

\begin{rem} \label{rem:constructive}
The proof of Theorem~\ref{thm:double_perp_of_flat_ML} is to some extent constructive. Namely, if $R$ is a countable ring and $C$ is a module which is not cotorsion, the theorem gives us a recipe how to construct $E \in \D$ \st $\Ext^1_R(E,C) \ne 0$, and it allows us to estimate the size of $E$ based on the size of $C$. Note that if $R$ is non-perfect, the size of $E$ must grow with the size of $C$. This is because for any set $\clS \subseteq \D$, we have ${^\perp (\clS^\perp)} \subseteq \D \subsetneqq \Flat{R}$ by~\cite[Proposition 1.9]{AH} and~\cite[Corollary 3.2.3]{GT}.
\end{rem}


\end{document}